\newcommand{\oh}[1]{\widehat{#1}}
\newcommand{\tl}[1]{\widetilde{#1}}
\newcommand{\abs}[1]{\vert {#1} \vert}
\newcommand{\ip}[2]{\left\langle {#1}, {#2} \right\rangle}
\newcommand{\tensor}{\otimes}
\newcommand{\di}{\partial}
\newcommand{\lp}{\Delta}
\newcommand{\bb}[1]{\mathbb {#1}}
\newcommand{\vx}[1]{{\bf {#1}}}
\newcommand{\dx}[2]{d{#1}^{#2}}
\newcommand{\ds}{\displaystyle}
\newtheorem{defi}{Definition}[section]
\newtheorem{prop}[defi]{Proposition}
\newtheorem{thm}[defi]{Theorem}
\newtheorem{ex}[defi]{Example}
\newtheorem{remar}[defi]{Remark}
\newenvironment{proof}{\noindent {\bf Proof:}}{\hfill{$\blacksquare$} \newline}
\title{Heat Kernel for Simply-connected Riemann Surfaces}
\author{Trevor H. Jones \and Dan Kucerovsky}
\date{\today}
\begin{document}
	\maketitle

	\begin{abstract}
		From the uniformization theorem, we know that every Riemann surface has a simply-connected covering space.  Moreover, there are only three simply-connected Riemann surfaces: the sphere, the Euclidean plane, and the hyperbolic plane.  In this paper, we collect the known heat kernels, or Green's functions, for these three surfaces, and we derive the differential forms heat kernels as well.  Then we give a method for using the heat kernels on the universal cover to generate the heat kernel on the underlying surface.
	\end{abstract}

	\section{Introduction} \label{sec:models_h2}
		For the purposes of this paper, we will be concentrating on the three simply-connected Riemann surfaces: the sphere, the plane, and the hyperbolic plane.  We will study the solution operator for the differential forms heat equations on these surfaces.  For a Riemann surface, $M$, the heat equation can be written as,
			\begin{align} \label{eq:he_forms}
				(\di_t + \lp ) \omega(\vx{x},t) & = 0 \nonumber \\
				\omega(\vx{x},0) & =  \nu(\vx{x})
			\end{align}
		where $\omega$ is a differential form depending on a point $\vx{x} \in M$ and on time, $t$, with a bounded, $L^2$ initial condition $\nu(\vx{x})$.  The Laplace-Beltrami operator, $\lp = (d + d^*)^2 = dd^* + d^*d$, is a linear differential operator which maps $k$-forms to $k$-forms.  Thus we do not need to consider mixed degree forms.  This means the heat equation on $M$ is actually three separate equations, one for $0$-forms, which are identified with functions on $M$, one for $2$-forms, which is isomorphic to the $0$-form case, and one for $1$-forms, which yields two differential equations, which are sometimes coupled.

	\section{The Euclidean Plane}
		Let us begin with the heat kernel for differential forms in the Euclidean plane, with the usual metric $\dx{s}{2} = \dx{x}{2} + \dx{y}{2}$.  Let $\omega = A(\vx{x},t) + B(\vx{x},t) \dx{x}{} + C(\vx{x},t) \dx{y}{} + D(\vx{x},t) \dx{x}{} \wedge \dx{y}{},$ be an arbitrary differential form with coefficients having appropriate partial derivatives.  Then the heat equation applied to $\omega$ has the following form:
		\begin{align} \label{eq:he_expl_r2}
			\left( \lp + \di_t \right) \omega =& \left( A_t - A_{xx} - A_{yy} \right)
			+ \left( B_t - B_{xx} - B_{yy} \right) \dx{x}{} \\
			&+ \left( C_t - C_{xx} - C_{yy} \right) \dx{y}{}
			+ \left( D_t - D_{xx} - D_{yy} \right) \dx{x}{} \wedge \dx{y}{}
			= 0 \nonumber
		\end{align}
		The only way for this to be true is for each component to be zero.  This means each of the coefficient functions of $\omega$ must satisfy $u_t = u_{xx} + u_{yy}$.  This has a well known Green's function, or heat kernel,
			$$K(\vx{x}_1, \vx{x}_2, t) = \frac{1}{4\pi t} \exp \left( -\frac{\abs{\vx{x}_1 - \vx{x}_2}^2}{4t} \right),$$
		where $\vx{x}_i = (x_i,y_i)$ and $\abs{\vx{x}}$ is the usual norm on $\bb{R}^2$.  So the heat kernel for differential forms on the Cartesian plane is
		\begin{align} \label{eq:hk_r2}
			K(\vx{x}_1, \vx{x}_2, t) &= \frac{1}{4\pi t} \exp \left( -\frac{\abs{\vx{x}_1 - \vx{x}_2}^2}{4t} \right) \left[ 1_{\vx{x}_1} \tensor 1_{\vx{x}_2} + \dx{x_1}{} \tensor \dx{x_2}{} + \dx{y_1}{} \tensor \dx{y_2}{} + \dx{x_1}{} \wedge \dx{y_1}{} \tensor \dx{x_2}{} \wedge \dx{y_2}{} \right].
		\end{align}

		If we change to polar coordinates, $\vx{x}_i = (r_i, \theta_i)$ with the metric $\dx{s}{2} = \dx{r}{2} + r^2 \dx{\theta}{2}$, the heat kernel becomes
		\begin{align} \label{eq:hk_r2_polar}
			K(\vx{x}_1, \vx{x}_2, t) &= \frac{1}{4\pi t} \exp \left( -\frac{\abs{\vx{x}_1 - \vx{x}_2}^2}{4t} \right) \left[ 1_{\vx{x}_1} \tensor 1_{\vx{x}_2} + \Omega + r_1\dx{r_1}{} \wedge \dx{\theta_1}{} \tensor r_2 \dx{r_2}{} \wedge \dx{\theta_2}{} \right]
		\end{align}
		where $\abs{\vx{x}_1 - \vx{x}_2}^2 = r_1^2 + r_2^2 - 2r_1 r_2 \cos (\theta_1 - \theta_2)$ and
		\begin{align*}
			\Omega =& \cos (\theta_1 - \theta_2) \left[ \dx{r_1}{} \tensor \dx{r_2}{} + r_1 \dx{\theta_1}{} \tensor r_2 \dx{\theta_2}{} \right] + \sin (\theta_1 - \theta_2) \left[ \dx{r_1}{} \tensor r_2 \dx{\theta_2}{} - r_1 \dx{\theta_1}{} \tensor \dx{r_2}{} \right].
		\end{align*}

	\section{The Hyperbolic Plane} \label{sec:h2}
			For the case of the hyperbolic plane, we will use the hyperboloid model, which we denote $H^2$, with the metric $\dx{s}{2} = \dx{r}{2} + \sinh^2 r \dx{\theta}{2}$.  If we let $\omega = A(\vx{x},t) + B(\vx{x},t) \dx{r}{} + C(\vx{x},t) \sinh r \dx{\theta}{} + D(\vx{x},t) \sinh r \dx{r}{} \wedge \dx{\theta}{},$ be an arbitrary differential form with coefficients having appropriate partial derivatives.  Then the heat equation applied to $\omega$ has the following form:
			\begin{align} \label{eq:he_h2}
				(\di_t + \lp) \omega =& \left( A_t - A_{rr} - \frac{\cosh r}{\sinh r} A_r - \frac{1}{\sinh^2 r} A_{\theta \theta} \right) \nonumber \\
				&+ \left( B_t - B_{rr} - \frac{\cosh r}{\sinh r} B_r  + \frac{1}{\sinh^2 r} B - \frac{1}{\sinh^2 r} B_{\theta \theta} + 2 \frac{\cosh r}{\sinh r} C_\theta \right) \dx{r}{} \nonumber \\
				&+ \left( C_t - C_{r r} - \frac{\cosh r}{\sinh r} C_r  + \frac{1}{\sinh^2 r} C - \frac{1}{\sinh^2 r} C_{\theta \theta} - 2 \frac{\cosh r}{\sinh r} B_\theta \right) \sinh r \dx{\theta}{} \nonumber \\
				&+ \left( D_t - D_{r r} - \frac{\cosh r}{\sinh r} D_r - \frac{1}{\sinh^2 r} D_{\theta \theta} \right) \sinh r\dx{r}{} \wedge \dx{\theta}{} = 0.
			\end{align}
			We chose $\sinh r \dx{\theta}{}$ and $\sinh r \dx{r}{} \wedge \dx{\theta}{}$ since these differential forms make the different parts of the heat equation similar, thus simplifying our problem.  It is also worth noting that these forms have unit length under the local inner product.

			Notice that the $\dx{r}{}$ and $\sinh r \dx{\theta}{}$ portion of the equation are coupled with a derivative in $\theta$.  If we consider the Laplacian applied only to a radially symmetric argument, then all terms involving a derivative in $\theta$ disappear, so the equations are no longer coupled.  For forms independent of $\theta$, the heat equation is
			\begin{align} \label{eq:he_h2_rad}
				(\di_t + \lp) \omega =& \left( A_t - A_{r r} - \frac{\cosh r}{\sinh r} A_r \right) + \left(B_t - B_{r r} - \frac{\cosh r}{\sinh r} B_r  + \frac{1}{\sinh^2 r} B \right) \dx{r}{} \nonumber \\
				&+ \left( C_t - C_{r r} - \frac{\cosh r}{\sinh r} C_r  + \frac{1}{\sinh^2 r} C  \right) \sinh r \dx{\theta}{} + \left( D_t - D_{r r} - \frac{\cosh r}{\sinh r} D_r \right) \sinh r \dx{r}{} \wedge \dx{\theta}{} \nonumber \\
				=& 0.
			\end{align}

			The heat kernel for functions, or $0$-forms, on $H^2$ is given by McKean \cite{aubttsoloamonc} and a derivation of the formula can be found in Chavel \cite{eirg}.  The heat kernel for functions on $H^2$ is
				\begin{align} \label{eq:hk0_h2}
					K_0(\vx{x},\vx{y},t) &= \frac{1}{2 \pi} \int_0^\infty P_{-\frac{1}{2} + i\rho}(\cosh d_{H^2}(\vx{x},\vx{y})) \rho \exp\left(-\left(\frac{1}{4} + \rho^2\right)t \right) \tanh \pi \rho d \rho.
				\end{align}
			After some simplification this agrees with the heat kernel given in \cite{aubttsoloamonc}:
				$$K_0(d_{H^2}(\vx{x},\vx{y}), t) = \sqrt{2} \exp\left(-\frac{t}{4}\right)(4\pi t)^{-\frac{3}{2}} \int_{d_{H^2}(\vx{x},\vx{y})}^\infty \frac{s \exp\left(-\frac{s^2}{4t}\right)}{\left( \cosh s - \cosh d_{H^2}(\vx{x}, \vx{y})\right)^{\frac{1}{2}}} ds.$$

			The approach we will use to arrive at the 1-form heat kernel is similar to the approach used by Chavel \cite{eirg}, that is, to find an integral transform for the radial heat equation, and then use translation to arrive at the full heat kernel.  To find the integral transform required for this, we will solve the following eigenfunction problem:
				\begin{align} \label{eq:eigen_h2}
					-E_{rr} - \frac{\cosh r}{\sinh r} E_r + \frac{1}{\sinh^2 r} E &= \lambda E = \left( \frac{1}{4} + \rho^2 \right) E
				\end{align}
			Since the Laplacian is a positive operator, there are no negative or complex eigenvalues.  Since the continuous spectrum of the Laplacian on $H^2$ is $[\frac{1}{4},\infty)$, as stated in \cite{lhfocm,tdfsohs}, we are justified in representing $\lambda$ by $\frac{1}{4} + \rho^2$.  By changing variables, $x = \cosh r$  and $\frac{1}{4} + \rho^2 = - v(v+1)$, \eqref{eq:eigen_h2} can be transformed into Legendre's equation of order $v$ and degree 1.  Since we will be considering only bounded solutions, the solution to \eqref{eq:eigen_h2} is $E_\rho(r) = P^1_{-\frac{1}{2} + i \rho}(\cosh r)$, with $\rho \geq 0$.  We are able to disregard $\rho <0$ since $P^1_{\frac{1}{2} - i\rho}(x) = P^1_{-\frac{1}{2} + i\rho}(x)$.

			With these eigenfunctions, we use the integral transform given by
				\begin{align*} 
					\oh{f}(\rho) &:= \ip{E_\rho(r_y) \dx{r_y}{}}{f(r_y) \dx{r_y}{}} = \int_{H^2} E_\rho(r_y) \dx{r_y}{} \wedge * f(r_y) \dx{r_y}{}.
				\end{align*}
			Since $f$ and $E_\rho$ are independent of $\theta_y$, the integral transform simplifies to
				\begin{align} \label{eq:mehler_fock}
					\oh{f}(\rho) &= 2\pi \int_0^\infty E_\rho(r_y)f(r_y) \sinh r_y \dx{r_y}{}.
				\end{align}
			This is proportional to the Mehler-Fock transform of order $-1$, \cite{tolmagmt}, with inverse transform
				\begin{align} \label{eq:mf_inverse}
					f(r_y) \dx{r_y}{}  & = -\frac{1}{2 \pi} \int_0^\infty \oh{f}(\rho) P^{-1}_{-\frac{1}{2} + i \rho}(\cosh r_y) \rho \tanh \pi \rho \dx{\rho}{} \tensor \dx{r_y}{} \\
					& = \frac{1}{2 \pi} \int_0^\infty \oh{f}(\rho) \frac{\rho \tanh \pi \rho}{\frac{1}{4} + \rho^2} E_\rho(r_y) \dx{\rho}{} \tensor \dx{r_y}{} \nonumber
				\end{align}

			Because the Laplacian is self-adjoint with respect to this inner product, when we apply the transform to the $\dx{r}{}$ portion of the heat equation \eqref{eq:he_h2_rad}, we have
				\begin{align*}
					\ip{E_\rho(r_y) \dx{r_y}{}}{(\di_t + \lp) B(r_y,t) \dx{r_y}{}} & = \oh{B}_t(\rho,t) + \left( \frac{1}{4} + \rho^2 \right) \oh{B}(\rho,t) = 0 \\
					\oh{B}(\rho,0) & = \oh{f}(\rho)
				\end{align*}
			where $B(r_y,0) = f(r_y)$.  Thus $\ds \oh{B}(\rho, t) = \oh{f}(\rho) \exp\left( {- \left(\frac{1}{4} + \rho^2 \right) t} \right).$  So, to solve for $B$ we apply the inverse transform \eqref{eq:mf_inverse}, and we get
				\begin{align*}
					B(r_x,t) \dx{r_x}{} & = \frac{1}{2\pi} \int_0^\infty \left( 2\pi \int_0^\infty E_\rho(r_y) f(r_y) \sinh r_y \dx{r_y}{} \right) \exp\left( {- \left(\frac{1}{4} + \rho^2 \right) t} \right) \frac{\rho \tanh \pi \rho}{\frac{1}{4} + \rho^2} E_\rho(r_x) \dx{\rho}{} \tensor \dx{r_x}{} \\
					& =  \int_0^\infty f(r_y) \sinh r_y \left( \int_0^\infty \exp\left( {- \left(\frac{1}{4} + \rho^2 \right) t} \right) \frac{\rho \tanh \pi \rho}{\frac{1}{4} + \rho^2} E_\rho(r_y) E_\rho(r_x) \dx{\rho}{} \right) \dx{r_y}{} \tensor \dx{r_x}{}
				\end{align*}

			Since $\ds P^1_{-\frac{1}{2} + i\rho}(\cosh r) = \frac{d}{dr} P_{-\frac{1}{2} + i\rho}(\cosh r)$ and
				\begin{align} \label{eq:int_P0}
					P_{-\frac{1}{2} + i\rho}(\cosh r_y)P_{-\frac{1}{2} + i\rho}(\cosh r_x) & = \frac{1}{2\pi} \int_0^{2\pi} P_{-\frac{1}{2} + i\rho}(\cosh r_y \cosh r_x - \sinh r_y \sinh r_x \cos \alpha) \dx{\alpha}{}
				\end{align}
			(see formulas in \cite{homf, tolmagmt, toisap}), we have
				\begin{align*}
					B(r_x,t) \dx{r_x}{} & = \frac{1}{2\pi} \int_0^\infty f(r_y) \sinh r_y \int_0^\infty \exp\left( {- \left(\frac{1}{4} + \rho^2 \right) t} \right) \frac{\rho \tanh \pi \rho}{\frac{1}{4} + \rho^2} \\
					& \hspace*{0.5cm} \times  \frac{\di^2}{\di r_y \di r_x} \int_0^{2\pi} P_{-\frac{1}{2} + i\rho}(\cosh r \cosh r_x - \sinh r_y \sinh r_x \cos \alpha) \dx{\alpha}{} \dx{\rho}{} \dx{r_y}{} \tensor \dx{r_x}{}.
				\end{align*}
			In \eqref{eq:int_P0}, the variable $\alpha$ can be replaced by $\theta_y - \theta_x$, with the integration with respect to $\theta_y$, and keeping the same bounds.  This means the argument of the Legendre function is $\cosh d_{H^2}(\vx{x}, \vx{y})$, where $\vx{x} = (r_x, \theta_x)$ and $\vx{y} = (r_y, \theta_y)$.  Since the integral is over a full period, the result in independent of $\theta_y$ as well as $\theta_x$.  This means that we can replace the partial derivatives with exterior derivatives.  In other words,
				\begin{align} \label{eq:h2_dr}
					B(r_x,t) \dx{r_x}{} & = \frac{1}{2\pi} \int_0^\infty f(r_y) \sinh r_y \int_0^\infty \exp\left( {- \left(\frac{1}{4} + \rho^2 \right) t} \right) \frac{\rho \tanh \pi \rho}{\frac{1}{4} + \rho^2} \nonumber \\
					& \hspace*{0.5cm} \times \int_0^{2\pi} d_\vx{x} d_\vx{y} P_{-\frac{1}{2} + i\rho}(\cosh d_{H^2} (\vx{x}, \vx{y})) \dx{\theta_y}{} \dx{\rho}{}.
				\end{align}

			Using similar arguments, we can write
				\begin{align} \label{eq:h2_dtheta}
					C(r_x,t) \sinh r_x \dx{\theta_x}{} & = \frac{1}{2\pi} \int_0^\infty g(r_y) \int_0^\infty \exp\left( {- \left(\frac{1}{4} + \rho^2 \right) t} \right) \frac{\rho \tanh \pi \rho}{\frac{1}{4} + \rho^2} \nonumber \\
					& \hspace*{0.5cm} \times \int_0^{2\pi} *_\vx{x} *_\vx{y} d_\vx{x} d_\vx{y} P_{-\frac{1}{2} + i\rho}(\cosh d_{H^2} (\vx{x}, \vx{y})) \dx{\rho}{} \dx{r_y}{}.
				\end{align}
			Note that for equations \eqref{eq:h2_dr} and \eqref{eq:h2_dtheta}, that the ``missing'' differentials $\dx{r_y}{}$ and $\dx{\theta_y}{}$ come from the exterior derivative $d_\vx{y}$.

			Now let
				\begin{align*}
					G &= \frac{1}{2\pi} d_\vx{x} d_\vx{y} \int_0^\infty \exp\left( {- \left(\frac{1}{4} + \rho^2 \right) t} \right) \frac{\rho \tanh \pi \rho}{\frac{1}{4} + \rho^2} P_{-\frac{1}{2} + i\rho}(\cosh d_{H^2} (\vx{x}, \vx{y})) \dx{\rho}{}.
				\end{align*}
			Then $\ds B(r_x,t) \dx{r_x}{} = \int_0^{2\pi} \int_0^\infty G \wedge *_\vx{y} f(r_y) \dx{r_y}{}$ and $\ds C(r_x,t) \sinh r_x \dx{\theta_x}{} = \int_0^{2\pi} \int_0^\infty *_\vx{x} *_\vx{y} G \wedge *_\vx{y} g(r_y) \sinh r_y \dx{\theta_y}{}$.  Since $G \wedge *_\vx{y} g(r_y) \sinh r_y \dx{\theta_y}{} = 0$ and $*_\vx{x} *_\vx{y} G \wedge *_\vx{y} f(r_y) \dx{r_y}{}=0$, we can combine this statement into
				\begin{align*}
					B(r_x,t) \dx{r_x}{} + C(r_x,t) \sinh r_x \dx{\theta_x}{} &= \int_0^{2\pi} \int_0^\infty (I + *_\vx{x} *_\vx{y}) G \wedge *_\vx{y} (f(r_y) \dx{r_y}{} + g(r_y) \sinh r_y \dx{\theta_y}{}).
				\end{align*}
			Thus it appears that $(I + *_\vx{x} *_\vx{y}) G$ plays the role of the heat kernel for radially symmetric initial conditions.  We will show that $(I + *_\vx{x} *_\vx{y}) G$ is the heat kernel for 1-forms on the hyperbolic plane.

			\begin{thm} \label{thm:hk1_h2}
				Consider the heat equation on the hyperbolic plane with metric $\dx{s}{2} = \dx{r}{2} + \sinh^2 r \dx{\theta}{}$:
					\begin{align} \label{eq:he_thm}
						(\lp + \di_t) \omega(\vx{x},t) &= 0 \nonumber \\
						\omega(\vx{x},0) & = \nu(\vx{x})
					\end{align}
				where $\omega$ and $\nu$ have the form $f \dx{r}{} + g \sinh r \dx{\theta}{}$ and $\nu$ is a bounded $L^2$ 1-form.  The heat kernel, $K_1(\vx{x},\vx{y},t)$ is given by the formula $K_1(\vx{x},\vx{y},t) = (I + *_\vx{x} *_\vx{y}) G$ where
					\begin{align*}
						G &= \frac{1}{2\pi} d_\vx{x} d_\vx{y} \int_0^\infty \exp\left( {- \left(\frac{1}{4} + \rho^2 \right) t} \right) \frac{\rho \tanh \pi \rho}{\frac{1}{4} + \rho^2} P_{-\frac{1}{2} + i\rho}(\cosh d_{H^2} (\vx{x}, \vx{y})) \dx{\rho}{}.
					\end{align*}
			\end{thm}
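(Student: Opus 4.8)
The plan is to verify directly that $K_1 = (I + *_{\vx{x}} *_{\vx{y}}) G$ satisfies the three defining properties of a heat kernel for $1$-forms: (i) it solves the heat equation $(\lp_{\vx{x}} + \di_t) K_1 = 0$ in the first variable for $t > 0$; (ii) it reproduces the initial condition, i.e. $\int_{H^2} K_1(\vx{x},\vx{y},t) \wedge *_{\vx{y}} \nu(\vx{y}) \to \nu(\vx{x})$ as $t \to 0^+$ for bounded $L^2$ one-forms $\nu$; and (iii) it has the requisite symmetry/adjointness so that the solution operator is the correct one. The construction preceding the theorem already furnishes the heat kernel when $\nu$ is radially symmetric (of the form $f\dx{r}{} + g\sinh r\,\dx{\theta}{}$ with $f, g$ independent of $\theta$), so the real content is upgrading from radial to general $L^2$ data.

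First I would establish property (i). Write $\Phi_\rho(\vx{x},\vx{y}) := P_{-\frac{1}{2}+i\rho}(\cosh d_{H^2}(\vx{x},\vx{y}))$; by \eqref{eq:int_P0} this is, up to the $\theta$-average, a product of the scalar eigenfunctions $P_{-\frac12+i\rho}(\cosh r)$ with eigenvalue $\frac14+\rho^2$ for the scalar Laplacian. Since $d$ and $*$ commute with $\lp$ on a fixed manifold (as $\lp = (d+d^*)^2$ and $*\lp = \lp *$), the $1$-form $d_{\vx{x}} d_{\vx{y}} \Phi_\rho$ is an eigen-$1$-form of $\lp_{\vx{x}}$ with the same eigenvalue $\frac14+\rho^2$, and likewise $*_{\vx{x}}*_{\vx{y}} d_{\vx{x}} d_{\vx{y}} \Phi_\rho$. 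Differentiating $G$ under the integral sign, the factor $\exp(-(\frac14+\rho^2)t)$ contributes $-(\frac14+\rho^2)$ upon $\di_t$, exactly cancelling the $\lp_{\vx{x}}$ eigenvalue; hence $(\lp_{\vx{x}} + \di_t) G = 0$, and applying $(I + *_{\vx{x}}*_{\vx{y}})$ preserves this. One must check the integral and its $r$- and $t$-derivatives converge uniformly on compact subsets of $\{t>0\}$ away from the diagonal; this follows from the known decay of $P_{-\frac12+i\rho}$ in $\rho$ together with the Gaussian-type factor $e^{-\rho^2 t}$, mirroring the scalar estimate behind \eqref{eq:hk0_h2}.

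Next, property (ii): the operator defined by convolution against $K_1$ must converge strongly to the identity as $t \to 0^+$. Here I would invoke the decomposition of an arbitrary $L^2$ one-form into its "$d\psi$-type" and "$*d\phi$-type" pieces relative to the radial variable — equivalently, use that the Mehler–Fock transform of order $-1$ in \eqref{eq:mehler_fock}–\eqref{eq:mf_inverse} together with the Fourier expansion in $\theta$ diagonalizes $\lp$ on $1$-forms, so that on each joint eigenspace the kernel acts by the scalar multiplier $e^{-(\frac14+\rho^2)t}$, which tends to $1$. Combined with the Plancherel identity for the transform this gives strong $L^2$-convergence to the identity; uniqueness of the bounded $L^2$ solution of \eqref{eq:he_thm} (self-adjointness of $\lp$, spectral calculus) then forces $K_1$ to be \emph{the} heat kernel. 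The term $*_{\vx{x}}*_{\vx{y}} G$ must be carried along throughout: as noted before the theorem, $G \wedge *_{\vx{y}}(\text{the }\sinh r\,\dx{\theta}{}\text{-part}) = 0$ and $*_{\vx{x}}*_{\vx{y}} G \wedge *_{\vx{y}}(\text{the }\dx{r}{}\text{-part}) = 0$, so the two summands of $K_1$ act on complementary components of $\nu$, and together they cover all of $\nu$.

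The main obstacle I anticipate is the passage from radial to general data — specifically, justifying that the rotational (Fourier-in-$\theta$) modes beyond the zeroth are correctly handled by the same kernel. The derivation in the excerpt only treats $\theta$-independent $f$ and $g$; to conclude that $(I + *_{\vx{x}}*_{\vx{y}}) G$ works for all bounded $L^2$ $\nu$ one needs either an invariance argument (the kernel depends on $\vx{x},\vx{y}$ only through $d_{H^2}(\vx{x},\vx{y})$ and the exterior derivatives, hence is equivariant under the isometry group $SO(2,1)$, and equivariance plus correctness on the radial subspace plus density of isometry-translates of radial forms yields the general case) or a direct mode-by-mode spectral verification. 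I would present the isometry/translation argument, in the spirit of Chavel's derivation cited for the scalar case, as the cleanest route, and I expect verifying the requisite convergence and exchange-of-limit steps in that argument to be the most delicate part of the write-up.
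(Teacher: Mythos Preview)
Your approach is viable, but the paper takes a genuinely different and shorter route that sidesteps precisely the obstacle you flag as the main difficulty.

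Instead of verifying the three heat-kernel properties directly, the paper first invokes existence and uniqueness to posit the solution $\omega$ of \eqref{eq:he_thm}, and then shows algebraically that $\ip{(I+*_\vx{x}*_\vx{y})G}{\nu}=\omega$. The key identity is
\[
\di_t G \;=\; -\,d_\vx{x} d_\vx{y} K_0(\vx{x},\vx{y},t),
\qquad\text{so}\qquad
G \;=\; \int_t^\infty d_\vx{x} d_\vx{y} K_0(\vx{x},\vx{y},\tau)\,\dx{\tau}{}.
\]
With this, one pushes $d_\vx{y}$ and $*_\vx{y}$ across the inner product onto $\nu$, uses the defining property of the \emph{scalar} kernel $K_0$ to recognize $\ip{K_0}{d^*_\vx{y}\nu}$ and $\ip{K_0}{*_\vx{y} d^*_\vx{y}\nu}$ as the $0$-form heat flows of $d^*\nu$ and $*d^*\nu$, identifies these (via commutation of $\lp$ with $d$, $d^*$, $*$ and uniqueness for the $0$-form problem) with $d^*_\vx{x}\omega$ and $*_\vx{x} d^*_\vx{x}\omega$, reassembles $\lp\omega$, and finally integrates $-\di_\tau\omega$ from $t$ to $\infty$ to recover $\omega(\vx{x},t)$.

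The payoff of the paper's route is that it never distinguishes radial from general data: the reduction to $K_0$ works for \emph{any} bounded $L^2$ one-form $\nu$, so the ``radial-to-general'' passage you identify as the delicate step simply does not arise. Your plan---direct verification of $(\lp_\vx{x}+\di_t)K_1=0$ via the eigenfunction property, then an $SO(2,1)$-equivariance or mode-by-mode Plancherel argument to upgrade from radial initial data---would also succeed, but it requires either checking that isometry-translates of radial $1$-forms span a dense subspace (not entirely obvious) or carrying out the full Mehler--Fock/$\theta$-Fourier diagonalization for every angular mode. The paper's trick trades all of that for one line of Hodge-theoretic bookkeeping and the already-established scalar kernel.
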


			\begin{proof}
				By existence and uniqueness of heat kernels, \cite{thkfpomobg}, \eqref{eq:he_thm} has a unique solution, $\omega$.  We will show that $\ds \omega = \int_{H^2} (I+*_\vx{x} *_\vx{y})G \wedge *_\vx{y} \nu(\vx{y})$.  First, let us define an inner product on forms as $\ds \ip{f}{g} = \int_{H^2} f \wedge * g.$  It is known that $d$ and $d^*$ are adjoint under this inner product and that $\ip{*f}{g} = -\ip{f}{*g}$.  With this notation, we will show that $\omega=\ip{(I+*_\vx{x} *_\vx{y})G}{\nu(\vx{y})}.$  Recall the 0-form heat kernel from \eqref{eq:hk0_h2}, and note that $\ds \di_t G = -d_\vx{x} d_\vx{y} K_0(\vx{x}, \vx{y}, t)$.  We will let $\ds G = \int_t^\infty d_\vx{x} d_\vx{y} K_0(\vx{x}, \vx{y}, \tau) \dx{\tau}{}.$
				\begin{align*}
					\ip{(I+*_\vx{x} *_\vx{y})G}{\nu(\vx{y})} &= \ip{G}{\nu(\vx{y})} + *_\vx{x} \ip{*_\vx{y}G}{\nu(\vx{y})} \\
					&= \int_t^\infty \left[ d_\vx{x} \ip{d_\vx{y} K_0(\vx{x}, \vx{y}, \tau)}{\nu(\vx{y})} + *_\vx{x} d_\vx{x} \ip{*_\vx{y} d_\vx{y} K_0(\vx{x}, \vx{y}, \tau)}{\nu(\vx{y})} \right] \dx{\tau}{} \\
					&= \int_t^\infty \left[ d_\vx{x} \ip{K_0(\vx{x}, \vx{y}, \tau)}{d^*_\vx{y} \nu(\vx{y})} - *_\vx{x} d_\vx{x} \ip{K_0(\vx{x}, \vx{y}, \tau)}{*_\vx{y} d^*_\vx{y} \nu(\vx{y})} \right] \dx{\tau}{} \\
					&= \int_t^\infty \left[ d_\vx{x} d^*_\vx{x} \omega(\vx{x}, \tau) - *_\vx{x} d_\vx{x} *_\vx{x} d^*_\vx{x} \omega(\vx{x}, \tau) \right] \dx{\tau}{} \\
					&= \int_t^\infty \lp \omega(\vx{x}, \tau) \dx{\tau}{} \\
					&= \int_t^\infty -\di_\tau \omega(\vx{x}, \tau) \dx{\tau}{} \\
					&= \omega(\vx{x},t)
				\end{align*}
				In this calculation, we used that fact that the Laplacian commutes with the Hodge star, the exterior derivative, and the coderivative to obtain results like: $0 = d_\vx{x} (\lp + \di_t) \omega = (\lp + \di_t) d_\vx{x} \omega$ with initial conditions $d_\vx{x} \nu(\vx{x}) = d_\vx{x} \omega(\vx{x}, 0)$.  We also used the properties of the 0-form heat kernel to give the results.
			\end{proof}

			Using the notation from the proof of Theorem \ref{thm:hk1_h2}, we can write the 1-form heat kernel on $H^2$ as
				$$K_1(\vx{x}, \vx{y}, t) = (I + *_\vx{x} *_\vx{y}) \int_t^\infty d_\vx{x} d_\vx{y} K_0(\vx{x}, \vx{y}, \tau) \dx{\tau}{}.$$

	\section{The Sphere}
			For the sphere, we use the unit sphere with the metric $\dx{s}{2} = \dx{\phi}{2} + \sin^2 \phi \dx{\theta}{2}$ with
			$\theta \in [0,2\pi)$, $\phi \in [0, \pi]$.  Let $\omega = A(\vx{x},t) + B(\vx{x},t) \dx{\phi}{} + C(\vx{x},t) \sin \phi \dx{\theta}{} + D(\vx{x},t) \sin \phi \dx{\phi}{} \wedge \dx{\theta}{}.$  Here we choose $\sin \phi \dx{\theta}{}$ and $\sin \phi \dx{\phi}{} \wedge \dx{\theta}{}$ for the obvious reasons.

			\begin{align} \label{eq:he_s2}
				(\di_t + \lp)\omega =& \left(A_t - A_{\phi \phi} - \frac{\cos \phi}{\sin \phi} A_\phi - \frac{1}{\sin^2 \phi} A_{\theta \theta} \right) \nonumber \\
				+ & \left( B_t - B_{\phi \phi} - \frac{\cos \phi}{\sin \phi} B_\phi  + \frac{1}{\sin^2 \phi} B - \frac{1}{\sin^2 \phi} B_{\theta \theta} + 2 \frac{\cos \phi}{\sin \phi} C_\theta \right) \dx{\phi}{} \nonumber \\
				+ & \left( C_t - C_{\phi \phi} - \frac{\cos \phi}{\sin \phi} C_\phi  + \frac{1}{\sin^2 \phi} C - \frac{1}{\sin^2 \phi} C_{\theta \theta} - 2 \frac{\cos \phi}{\sin \phi} B_\theta \right) \sin \phi \dx{\theta}{} \nonumber \\
				+ & \left( D_t - D_{\phi \phi} - \frac{\cos \phi}{\sin \phi} D_\phi - \frac{1}{\sin^2 \phi} D_{\theta \theta} \right) \sin \phi \dx{\phi}{} \wedge \dx{\theta}{} = 0.
			\end{align}

			If we have a radially symmetric solution, one that is independent of $\theta$, then the heat equation simplifies as before and we have

			\begin{align} \label{eq:he_s2_rad}
				(\di_t + \lp) \omega =& \left(A_t - A_{\phi \phi} - \frac{\cos \phi}{\sin \phi} A_\phi \right) + \left( B_t - B_{\phi \phi} - \frac{\cos \phi}{\sin \phi} B_\phi + \frac{1}{\sin^2 \phi} B \right) \dx{\phi}{} \nonumber \\
				+ & \left( C_t - C_{\phi \phi} - \frac{\cos \phi}{\sin \phi} C_\phi + \frac{1}{\sin^2 \phi} C \right) \sin \phi \dx{\theta}{} + \left( D_t - D_{\phi \phi} - \frac{\cos \phi}{\sin \phi} D_\phi \right) \sin \phi \dx{\phi}{} \wedge \dx{\theta}{} = 0.
			\end{align}

			Notice a similarity between equations \eqref{eq:he_h2_rad} and \eqref{eq:he_s2_rad}.  By similar techniques, the eigenfunctions for the $0$-form case are $E_n(\phi) = c_n P_n(\cos \phi)$, which has eigenvalues $n^2+n$.  The normalizing coefficient is $\ds c_n~=~\sqrt{\frac{2n+1}{4\pi}}$, using the inner product $\ds \ip{f}{g} = \int_{S^2} f \wedge *g$.  We require the eigenfunction to be bounded and $L^2$, so $n$ is restricted to the positive integers.

			We can write the $0$-form radially symmetric heat kernel on $S^2$ as
				$$G_0(\phi_x,\phi_y,t) = \frac{1}{4\pi} \sum_{n=0}^\infty (2n+1) \exp\left(-n(n+1)t\right) P_n(\cos \phi_x) P_n(\cos \phi_y).$$
			We can simplify this expression by setting $\phi_y=0$.  This means $P_n(\cos \phi_y) = 1$.  We also note that $\phi_x$ is just the distance from the ``north'' pole to $\vx{x}$, so we can recover the full heat kernel on $S^2$ by replacing $\phi_x$ by the distance between the points $\vx{x}$ and $\vx{y}$.  Thus
			\begin{align} \label{eq:hk_s2_0}
				K_0(\vx{x},\vx{y},t) = \frac{1}{4\pi} \sum_{n=0}^\infty \left( 2n+1 \right) \exp\left(-n(n+1)t\right) P_n \left( \cos d_{S^2}(\vx{x},\vx{y}) \right).
			\end{align}

			The situation is similar for the $1$-form case: $E_n(\phi) = c_n P^1_n(\cos \phi)$, with $\ds c_n = \sqrt{\frac{2n+1}{4\pi n (n+1)}}$, and eigenvalue $n^2+n$.  Note that $n > 0$, so there are no harmonic $L^2$ $1$-forms on the unit sphere.  For the $\dx{\phi_x}{} \tensor \dx{\phi_y}{}$ portion of the heat kernel we proceed in a manner similar to that of Section \ref{sec:h2}.  Let
				$$G_1(\phi_x, \phi_y, t) = \frac{1}{4\pi} \sum_{n=1}^\infty \frac{2n+1}{n(n+1)} \exp\left(-n(n+1)t\right) P^1_n(\cos \phi_x) P^1_n(\cos \phi_y) \dx{\phi_x}{} \tensor \dx{\phi_y}{}.$$
			Using some identities of Legendre functions \cite{toisap}, we have $\ds P^1_n(\cos \phi) = \frac{d}{d\phi} P_n(\cos \phi).$  This means we can replace $P^1_n(\cos \phi_x) \dx{\phi_x}{}$ with $\ds d_\vx{x} P_n(\cos \phi_x)$, with a similar expression for the $\vx{y}$ term as well.  So
				$$G_{1\phi}(\phi_x, \phi_y, t) = \sum_{n=1}^\infty F(n,t) d_\vx{x} d_\vx{y} P_n(\cos \phi_x) P_n(\cos \phi_y),$$
			where $F(n,t) = \ds \frac{1}{4\pi} \frac{2n+1}{n(n+1)} \exp\left(-n(n+1)t\right)$.  By similar arguments we find that
				$$G_{1\theta}(\phi_x, \phi_y, t) = \sum_{n=1}^\infty F(n,t) *_\vx{x} *_\vx{y} d_\vx{x} d_\vx{y} P_n(\cos \phi_x) P_n(\cos \phi_y).$$
			Therefore the radially symmetric $1$-form heat kernel can be written as
				$$G_{1}(\phi_x, \phi_y, t) = \sum_{n=1}^\infty F(n,t) \left( I + *_\vx{x} *_\vx{y} \right) d_\vx{x} d_\vx{y} P_n(\cos \phi_x) P_n(\cos \phi_y).$$

			To recover the full heat kernel, we will make use of the identity
				$$\int_0^{2\pi} P_n(\cos \phi_x) P_n(\cos \phi_y) \dx{\theta_y}{} = \int_0^{2\pi} P_n(\cos \phi_x \cos \phi_y + \sin \phi_x \sin \phi_y \cos (\theta_x - \theta_y)) \dx{\theta_y}{}$$
			which is derived from the ``addition theorem" (see 8.814, \cite{toisap}).  The distance formula on the unit sphere is given by $\cos d_{S^2}(\vx{x}, \vx{y}) = \cos \phi_x \cos \phi_y + \sin \phi_x \sin \phi_y \cos (\theta_x - \theta_y)$, so we will use this to simplify the expression.  We will consider the solution of the radial heat equation
			\begin{align*}
				u(\phi_x,t) \dx{\phi_x}{} & = \int_{S^2} \sum_{n=1}^\infty F(n,t) d_\vx{x} d_\vx{y} P_n(\cos \phi_x) P_n(\cos \phi_y) \wedge *_\vx{y} f(\phi_y) \dx{\phi_y}{} \\
				&= \int_{S^2} \sum_{n=1}^\infty F(n,t) \left( d_\vx{x} d_\vx{y} P_n(\cos \phi_x) P_n(\cos \phi_y) \right) \wedge f(\phi_y) \sin \phi_y \dx{\theta_y}{} \\
				&= \int_0^\pi \sum_{n=1}^\infty F(n,t) f(\phi_y) \sin \phi_y d_\vx{x} d_\vx{y} \int_0^{2\pi} P_n(\cos \phi_x) P_n(\cos \phi_y) \dx{\theta_y}{} \\
				&= \int_0^\pi \sum_{n=1}^\infty F(n,t) f(\phi_y) \sin \phi_y d_\vx{x} d_\vx{y} \int_0^{2\pi} P_n(\cos d_{S^2}(\vx{x}, \vx{y})) \dx{\theta_y}{} \\
				&= \int_{S^2} \sum_{n=1}^\infty F(n,t) d_\vx{x} d_\vx{y} P_n(\cos d_{S^2}(\vx{x}, \vx{y})) \wedge *_\vx{y} f(\phi_y) \dx{\phi_y}{}
			\end{align*}
			Following the same argument, we have
			\begin{align*}
				u(\phi_x,t) \sin \phi_x \dx{\theta_x}{} &= \int_{S^2} \sum_{n=1}^\infty F(n,t) *_\vx{x} *_\vx{y} d_\vx{x} d_\vx{y} P_n(\cos d_{S^2}(\vx{x}, \vx{y})) \wedge *_\vx{y} g(\phi_y) \sin \phi_y \dx{\theta_y}{}
			\end{align*}
			In these two calculations, $f(\phi_x) \dx{\phi_x}{}$ and $g(\phi_x) \sin \phi_x \dx{\theta_x}{}$ are initial conditions for the heat equation.  From these calculations we have a candidate for the full $1$-form heat kernel:
			\begin{align} \label{eq:hk1_s2}
				K_1(\vx{x}, \vx{y}, t) &= \frac{1}{4\pi} \sum_{n=1}^\infty \frac{2n+1}{n(n+1)} \exp\left(-n(n+1)t\right) \left( I + *_\vx{x} *_\vx{y} \right) d_\vx{x} d_\vx{y} P_n( \cos d_{S^2}(\vx{x}, \vx{y}) )
			\end{align}
			The proof that this is indeed the $1$-form heat kernel follows exactly that of Theorem \ref{thm:hk1_h2}.

	\section{Tiling Method for Arbitrary Riemann Surfaces} \label{sec:tiling}
		As stated earlier, any Riemann surface will have one of the three simply-connected Riemann surfaces as its universal covering space.  In this section we will show how to generate the heat kernel for a Riemann surface based upon its universal cover and its covering group.

		Let $M$ be an arbitrary Riemann surface, $U$ its universal cover, and $G$ its covering group.  Then there is a projection map $\pi : U \rightarrow M$ such that $\pi(g \cdot \vx{x}) = \pi(\vx{x})$ for all $\vx{x} \in U$ and all $g \in G$.  This map induces a metric on $M$ compatible with the metric on $U$, with distance $d_M(\vx{x}, \vx{y}) = \min_{g \in G} d_M(\tl{\vx{x}}, g \cdot \tl{\vx{y}})$, where $\tl{\vx{x}}$ and $\tl{\vx{y}}$ are fixed pre-images of $\vx{x}, \vx{y} \in M$.  Since the Laplacian is defined in terms of the metric, we have $\lp_U \tl{u}(\vx{x}) = \lp_M u(\pi(\vx{x}))$ for $\vx{x}$ in a small open set in $U$, and $\tl{u}$ is the lift of the function $u$ to the universal cover.  With this information we can state the following theorem:

		\begin{thm} \label{thm:tiling}
			Let $M$ be an arbitrary Riemann surface, $U$ its universal cover, and $G$ its covering group.  Let $K_U(\vx{x}, \vx{y}, y)$ be the heat kernel on $U$.  Then the heat kernel on $M$ is given by
			\begin{align*}
				K_M(\vx{x}, \vx{y}, t) = \sum_{g \in G} K_U(\tl{\vx{x}}, g \cdot \tl{\vx{y}}, t)
			\end{align*}
			for fixed pre-images $\tl{\vx{x}}, \tl{\vx{y}}$ under the projection map $\pi$.
		\end{thm}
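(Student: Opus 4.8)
The plan is to verify that the proposed sum is well-defined and locally uniformly convergent, that it satisfies the heat equation on $M$ together with the correct initial condition, and then to invoke existence and uniqueness of the heat kernel on $M$ (cited as \cite{thkfpomobg}) to conclude. First I would check independence of the chosen pre-images. Elements of $G$ act on $U$ by isometries (deck transformations), and the heat kernels of our three model surfaces are equivariant under isometries, $K_U(\psi \vx{x}, \psi \vx{y}, t) = \psi_* K_U(\vx{x}, \vx{y}, t)$ for $\psi$ an isometry --- immediate for $0$- and $2$-forms, and for $1$-forms from the explicit formulas of Sections \ref{sec:h2} and the preceding one, since $d$, $d^*$ and $*$ are natural. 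Replacing $\tl{\vx{x}}$ by $h \cdot \tl{\vx{x}}$ turns the $g$-th summand into the $(h^{-1}g)$-th summand, and re-indexing the sum over $G$ leaves it unchanged; the same argument applies to $\tl{\vx{y}}$. Hence $\sum_{g \in G} K_U(\tl{\vx{x}}, g \cdot \tl{\vx{y}}, t)$ descends to a well-defined bi-form on $M \times M \times (0,\infty)$.

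Next I would establish convergence. Since $G$ acts properly discontinuously, each orbit $G \cdot \tl{\vx{y}}$ is a closed discrete subset of $U$, and for $\vx{x}, \vx{y}$ in a compact set and any fixed $R$ there are only finitely many $g$ with $d_U(\tl{\vx{x}}, g \cdot \tl{\vx{y}}) \le R$, with the count growing at most exponentially in $R$. Combined with the Gaussian-type off-diagonal decay of $K_U$ --- explicit in the Euclidean case, and for $H^2$ from the McKean formula \eqref{eq:hk0_h2} together with the integral expression defining $G$, and trivial for $S^2$ where $G$ is finite --- the series and each of its $\vx{x}$-, $\vx{y}$- and $t$-derivatives converge locally uniformly. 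This justifies differentiating term by term: because $\pi$ is a local isometry, $\lp_M$ lifts to $\lp_U$, so each summand satisfies $(\lp + \di_t) K_U(\cdot, g \cdot \tl{\vx{y}}, t) = 0$, and therefore so does $K_M$; smoothness for $t > 0$ follows the same way.

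Finally I would identify $K_M$ with the heat kernel by an unfolding argument. Given a bounded $L^2$ form $\nu$ on $M$, lift it to the $G$-periodic form $\tl{\nu}$ on $U$. Writing the integral over $M$ as an integral over a fundamental domain $\mathcal{F}$ and unfolding,
\begin{align*}
	\int_M K_M(\vx{x}, \vx{y}, t) \wedge *_\vx{y} \nu(\vx{y}) &= \sum_{g \in G} \int_{\mathcal{F}} K_U(\tl{\vx{x}}, g \cdot \tl{\vx{y}}, t) \wedge *_{\tl{\vx{y}}} \tl{\nu}(\tl{\vx{y}}) \\
	&= \int_U K_U(\tl{\vx{x}}, \tl{\vx{z}}, t) \wedge *_{\tl{\vx{z}}} \tl{\nu}(\tl{\vx{z}}),
\end{align*}
where interchanging sum and integral is legitimate by the convergence estimate above, and the last step uses $\bigcup_{g \in G} g \cdot \mathcal{F} = U$ up to measure zero together with the $G$-periodicity of $\tl{\nu}$. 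The right-hand side is the solution on $U$ of the heat equation with initial data $\tl{\nu}$ (by the reproducing property established in the proof of Theorem \ref{thm:hk1_h2} and its analogues); by uniqueness and the $G$-invariance of $\lp_U$ this solution is again $G$-periodic, so it descends to a solution on $M$ with initial data $\nu$. Existence and uniqueness of the heat kernel on $M$ \cite{thkfpomobg} then forces $K_M$ to be that kernel.

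The main obstacle I anticipate is the convergence-and-regularity step: one must bound the tail $\sum_{g} K_U(\tl{\vx{x}}, g \cdot \tl{\vx{y}}, t)$ and its derivatives uniformly on compacta, balancing the at-most-exponential growth of the number of $g$ with $d_U(\tl{\vx{x}}, g \cdot \tl{\vx{y}}) \le R$ against the $\exp(-R^2/4t)$-type decay of $K_U$, so that term-by-term differentiation and the Fubini step in the unfolding are both justified. Everything else --- the equivariance bookkeeping with deck transformations and the appeal to the cited uniqueness theorem --- is routine.
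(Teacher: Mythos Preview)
Your proof is correct and follows essentially the same route as the paper: lift the initial data to a $G$-periodic form on $U$, solve the heat equation there (obtaining a $G$-periodic solution by isometry-invariance of $K_U$, which is the content of the paper's Theorem~\ref{thm:he_g_per}), and descend to $M$ via Proposition~\ref{prop:projection}, then invoke uniqueness. Your treatment is in fact more complete than the paper's sketch---you make explicit the unfolding step over a fundamental domain that produces the sum over $G$, and you address convergence of the series, both of which the paper leaves to the reader.
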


		To prove this theorem we use the following definition and results which can be found in \cite{thkonrs}.

		\begin{defi} \label{def:g_periodic}
			Let $M$ be a manifold, and $G$ a group which acts on the manifold.  A function, $f$, on the manifold is said to be $G$-periodic if $f(g\cdot\vx{x}) = f(\vx{x})$ for each $g \in G$ and $\vx{x} \in M$.
		\end{defi}

		\begin{prop} \label{prop:projection}
			Let $M$ be a manifold, $\tl{M}$ be a cover of $M$ with covering group $G$, and let $V$ be a vector space.  Suppose we have a $G$-periodic function $f:\tl{M} \rightarrow V$.  Then there is a unique function $\oh{f}:M \rightarrow V$ such that $f = \oh{f} \circ \pi$, where $\pi$ is the covering map from $\tl{M}$ to $M$.
		\end{prop}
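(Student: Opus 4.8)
The plan is to build $\oh{f}$ by pushing $f$ forward along the fibers of $\pi$, to use the $G$-periodicity hypothesis (Definition \ref{def:g_periodic}) to check that this push-forward is well-defined, and to exploit surjectivity of the covering map for uniqueness.

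First I would record the structural fact underlying everything: since $\tl{M}$ is a cover of $M$ with covering group $G$ — a regular covering, which is automatic in the intended application because there $\tl{M}$ is the universal cover — the fibers of $\pi$ are exactly the $G$-orbits. Concretely, for every $p \in M$ and every $\tl{p} \in \pi^{-1}(p)$ one has $\pi^{-1}(p) = G \cdot \tl{p}$. It follows that if $\tl{p}, \tl{q} \in \tl{M}$ satisfy $\pi(\tl{p}) = \pi(\tl{q})$, then $\tl{q} = g \cdot \tl{p}$ for some $g \in G$, whence $f(\tl{q}) = f(g \cdot \tl{p}) = f(\tl{p})$ by $G$-periodicity. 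Thus $f$ is constant on each fiber of $\pi$.

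Next I would define $\oh{f} : M \to V$ by selecting, for each $p \in M$, some $\tl{p} \in \pi^{-1}(p)$ — this set is nonempty because $\pi$ is surjective — and setting $\oh{f}(p) := f(\tl{p})$. The previous step shows the value is independent of the chosen preimage, so $\oh{f}$ is a well-defined function, and $f = \oh{f} \circ \pi$ holds by construction. For uniqueness, suppose $\oh{f}_1, \oh{f}_2 : M \to V$ both satisfy $f = \oh{f}_i \circ \pi$; given $p \in M$, choose any $\tl{p}$ with $\pi(\tl{p}) = p$ and compute $\oh{f}_1(p) = \oh{f}_1(\pi(\tl{p})) = f(\tl{p}) = \oh{f}_2(\pi(\tl{p})) = \oh{f}_2(p)$, so $\oh{f}_1 = \oh{f}_2$.

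There is no deep obstacle here; the one point that genuinely requires care is the identification of the fibers of $\pi$ with the $G$-orbits, i.e. regularity of the covering, and for the universal cover this is immediate. If in addition one wants $\oh{f}$ to inherit continuity or smoothness from $f$ (which is what is actually used later), this follows because $\pi$ is a local diffeomorphism: around each $p$ there is an evenly covered neighborhood $U$ admitting a local section $s : U \to \tl{M}$ of $\pi$, and on $U$ one has $\oh{f} = f \circ s$, a composite of smooth maps.
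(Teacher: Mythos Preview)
Your argument is correct and follows essentially the same route as the paper: identify the fibers of $\pi$ with $G$-orbits, use $G$-periodicity to see that $f$ is constant on fibers, define $\oh{f}$ by choosing a preimage, and deduce uniqueness from surjectivity of $\pi$. Your additional remarks on regularity of the covering and on inheriting smoothness via local sections are not in the paper's proof but are welcome clarifications.
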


		\begin{proof}
			We know that if $\vx{y} = \pi(\vx{x})$, then $\pi^{-1}(\vx{y}) = \{ g \cdot \vx{x} | g \in G \}$.  Let $f$ be $G$-periodic.  We will define $\oh{f}$ as follows: $\oh{f}(\vx{y}) = f(\pi^{-1}(\vx{y}))$.  The inverse image of the point $\vx{y}$ is a set of points in $\tl{M}$.  However, as stated above, each of those points is the image of one point under the action of the group $G$.  Since $f$ is $G$-periodic, the set $\pi^{-1}(\vx{y})$ is mapped to a single point by $f$, making the function $\oh{f}$ well-defined.

			To show that $\oh{f}$ is unique, assume that $f = \oh{f} \circ \pi = \oh{g} \circ \pi$.  Since $\pi$ is surjective, $\pi(\tl{M}) = M$, hence $\oh{f}(\vx{x}) = \oh{g}(\vx{x})$ for all $\vx{x} \in M$.
		\end{proof}

		\begin{thm} \label{thm:he_g_per}
			Let $M$ be a Riemannian manifold and $G$ be a group of isometries acting on $M$ which preserve orientation.  Then the solution of the heat equation with $G$-periodic initial conditions is $G$-periodic.
		\end{thm}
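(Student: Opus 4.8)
The plan is to combine \emph{uniqueness} for the heat equation with the observation that every operator appearing in it commutes with the action of $G$. For a form $\omega$, being $G$-periodic means $g^\ast \omega = \omega$ for all $g \in G$, where $g^\ast$ denotes pullback; on $0$-forms this reduces to $\omega(g\cdot\vx{x}) = \omega(\vx{x})$, recovering Definition \ref{def:g_periodic}. The key algebraic fact is that an isometry $g$ satisfies $g^\ast \lp = \lp\, g^\ast$: the exterior derivative $d$ commutes with any pullback, and when $g$ is \emph{orientation-preserving} the Hodge star satisfies $g^\ast \circ * = * \circ g^\ast$, hence $d^\ast = \pm * d *$ also commutes with $g^\ast$, and therefore so does $\lp = dd^\ast + d^\ast d$. (On $0$-forms $\lp = d^\ast d$ is determined by the metric alone, so the orientation hypothesis is needed only to handle $1$- and $2$-forms, which is exactly the setting of this paper.)

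With this in hand, I would argue as follows. Let $\omega$ be the solution of $(\di_t + \lp)\omega = 0$, $\omega(\vx{x},0) = \nu(\vx{x})$, with $\nu$ a bounded $L^2$ form that is $G$-periodic, and fix $g \in G$. Set $\omega_g(\vx{x},t) := \big(g^\ast \omega(\cdot,t)\big)(\vx{x})$. Since $\di_t$ acts only on the time variable it commutes with the time-independent map $g^\ast$, and by the previous paragraph $\lp$ commutes with $g^\ast$, so
\begin{align*}
(\di_t + \lp)\omega_g = g^\ast\big((\di_t + \lp)\omega\big) = 0.
\end{align*}
The initial condition is $\omega_g(\vx{x},0) = (g^\ast \nu)(\vx{x}) = \nu(\vx{x})$. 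Because $g$ is an isometry it preserves $L^2$ norms and sup-norms, so $\omega_g$ belongs to the same class of data and solutions, and the existence-and-uniqueness theorem for the heat equation \cite{thkfpomobg} applies to it. Uniqueness then forces $\omega_g = \omega$ for all $t$; as $g \in G$ was arbitrary, $\omega(\cdot,t)$ is $G$-periodic for every $t$, which is the claim.

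The step I expect to be the crux is the commutation identity $g^\ast \circ * = * \circ g^\ast$ for orientation-preserving isometries, together with its consequence that $\lp$ commutes with $g^\ast$ on forms of every degree; this is the only place the orientation hypothesis is used, and it is what separates the differential-forms case from the classical scalar heat equation. A second, more routine, point to record explicitly is that applying $g^\ast$ keeps $\omega$ inside the function class (bounded, $L^2$) in which the cited uniqueness result is valid, so that the final appeal to uniqueness is justified; this is immediate from $g$ being an isometry.
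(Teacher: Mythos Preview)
Your proof is correct and takes a genuinely different route from the paper's. The paper argues directly from the heat-kernel representation of the solution: it uses that the differential-forms heat kernel is invariant under orientation-preserving isometries, $K(g\cdot\vx{x},g\cdot\vx{y},t)=K(\vx{x},\vx{y},t)$, and then performs the change of variables $\vx{y}=g\cdot\vx{z}$ in the integral $\omega(g\cdot\vx{x},t)=\int_M K(g\cdot\vx{x},\vx{y},t)\wedge *_\vx{y} f(\vx{y})$ to obtain $\omega(\vx{x},t)$. Your argument bypasses the kernel entirely: you show that $g^\ast$ commutes with $\lp$ (via $g^\ast d=d\,g^\ast$ and, for orientation-preserving isometries, $g^\ast *=*\,g^\ast$), so $g^\ast\omega$ solves the same initial-value problem as $\omega$, and then invoke uniqueness. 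The paper's approach is more hands-on and ties directly to the object it is studying, but it leaves the isometry-invariance of the forms heat kernel and the behaviour of $*_\vx{y}$ under the substitution as implicit facts; your approach is cleaner in that it isolates exactly where orientation-preservation enters (the Hodge star), and it generalizes immediately to any $G$-invariant evolution equation with a uniqueness theory, without needing an integral kernel at all.
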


		\begin{proof}
			Recall that the heat kernel is invariant under the action of isometries, that means $K(g\cdot\vx{x}, g\cdot\vx{y}, t) = K(\vx{x}, \vx{y}, t)$ for all isometries $g$.  To show the solution is $G$-periodic, we will use a substitution $\vx{y} = g \cdot \vx{z}$ with $\dx{\vx{y}}{} = \dx{\vx{z}}{}$.
			\begin{eqnarray*}
				\omega(g \cdot \vx{x},t) & = & \int_M K(g \cdot \vx{x}, \vx{y}, t) \wedge *_\vx{y} f(\vx{y}) \dx{\vx{y}}{} \\
				& = & \int_M K(g \cdot \vx{x}, g \cdot \vx{z}, t) \wedge *_\vx{z} f(g \cdot \vx{z}) \dx{\vx{z}}{} \\
				& = & \int_M K(g \cdot \vx{x}, g \cdot \vx{z}, t) \wedge *_\vx{z} f(\vx{z}) \dx{\vx{z}}{} \\
				& = & \int_M K(\vx{x}, \vx{z}, t) \wedge *_\vx{z} f(\vx{z}) \dx{\vx{z}}{} \\
				& = & \omega(\vx{x},t)
			\end{eqnarray*}
		\end{proof}

		The proof of Theorem \ref{thm:tiling} involves lifting the initial conditions on the surface $M$ to the universal cover $U$.  This creates a $G$-periodic initial condition for the heat equation on $U$, which has, by Theorem \ref{thm:he_g_per}, a $G$-periodic solution.  Then by Proposition \ref{prop:projection}, this solution projects to a unique solution of the heat equation on $M$.

	\section{Conclusions}
		We will conclude by summarizing the results.  First, for the simply-connect surfaces, the $1$-form heat kernel takes the form:
		\begin{align*}
			K_1(\vx{x}, \vx{y}, t) &= \left( I + *_\vx{x} *_\vx{y} \right) \int_t^\infty d_\vx{x} d_\vx{y} K_0(\vx{x}, \vx{y}, \tau) \dx{\tau}{}.
		\end{align*}
		A generalization to manifolds of higher dimension can be found in \cite{hkfom}.  Also, the $2$-form heat kernel is given by
		\begin{align*}
			K_2(\vx{x}, \vx{y}, t) &= *_\vx{x} *_\vx{y} K_0(\vx{x}, \vx{y}, t).
		\end{align*}
		This means that the three heat kernels, that is the $0$-, $1$-, and $2$-form heat kernels, can all be written in terms of the $0$-form heat kernel.  Thus to complete the catalog, here again are the $0$-form heat kernels:
		\begin{align*}
			K_0(\vx{x}, \vx{y}, t) &= \frac{1}{4\pi t} \exp\left(-\frac{d_{\bb{R}^2}(\vx{x}, \vx{y})^2}{4t}\right) & {\rm Euclidean\;plane} \\
			K_0(\vx{x}, \vx{y}, t) &= \frac{1}{2 \pi} \int_0^\infty P_{-\frac{1}{2} + i\rho}(\cosh d_{H^2}(\vx{x},\vx{y})) \rho \exp\left(-\left(\frac{1}{4} + \rho^2\right)t\right) \tanh \pi \rho d \rho & {\rm hyperbolic\;plane} \\
			K_0(\vx{x}, \vx{y}, t) &= \frac{1}{4\pi} \sum_{n=0}^\infty \left( 2n+1 \right) \exp\left(-n(n+1)t\right) P_n \left( \cos d_{S^2}(\vx{x},\vx{y}) \right) & {\rm sphere}
		\end{align*}

		Also, from Section \ref{sec:tiling}, if we know the covering group for an arbitrary Riemann surface, $M$, then the heat kernel on $M$ is given by
		\begin{align*}
			K_M(\vx{x}, \vx{y}, t) = \sum_{g \in G} K_U(\tl{\vx{x}}, g \cdot \tl{\vx{y}}, t),
		\end{align*}
		where $U$ is the universal cover of $M$.

	\bibliography{hkfsrs}
	\bibliographystyle{plain}
\end{document}